\theoremstyle{definition}
\newtheorem{defi}{Definition}
\newtheorem*{rem}{Remark}
\newtheorem{cor}{Corollary}
\theoremstyle{plain}
\newtheorem{thm}{Theorem}
\newtheorem{lem}[thm]{Lemma}
\newtheorem{prop}[thm]{Proposition}
\DeclareMathOperator{\ric}{Ric}
\DeclareMathOperator{\tr}{tr}
\begin{document}

\title[$\mathcal{A}$-manifold on principal torus bundle]{Construction of an $\mathcal{A}$-manifold on a principal torus bundle}
\author[G. Zborowski]{Grzegorz Zborowski}
\address{Cracow University of Technology\\ Warszawska 24\\ 31-155 Krak\'ow, Poland}
\address{University of Maria Curie-Skłodowska\\ Pl. Marii Curie-Skłodowskiej 5, 20-035 Lublin, Poland}
\email{gzborowski@pk.edu.pl}
\subjclass[2000]{Primary 53C25}

\begin{abstract}
We construct a new example of an $\mathcal{A}$-manifold, i.e. a Riemannian manifold with cyclic-parallel Ricci tensor. This condition can be viewed as a generalization of the Einstein condition. The underlying manifold for our construction is a principal torus bundle over a product of K\"ahler-Einstein manifolds with fibre a torus of arbitrary dimension. 
\end{abstract}

\thanks{The author would like to thank prof. W. Jelonek for the ideas and time spent to make this work right.}

\maketitle

\section{Introduction}

In this paper we would like to present a construction of a special kind of Riemannian manifold called $\mathcal{A}$-manifold.
\begin{defi}
A Riemannian manifold $(M,g)$ is called an \emph{$\mathcal{A}$-manifold} iff the Ricci tensor $\rho$ of $(M,g)$ satisfies
\begin{displaymath}
\nabla_X\rho(X,X) = 0,
\end{displaymath}
for any $X\in TM$. Moreover if the Ricci tensor $\rho$ is not parallel then $(M,g)$ is called a \emph{proper $\mathcal{A}$-manifold}.
\end{defi}
This condition was first considered by A. Gray in \cite{gray} and then it appeared in \cite{besse} where the problem of finding non-homogenous examples with non-parallel Ricci tensor was given. 

Recall that Einstein manifold satisfies the condition that the Ricci tensor is a constant multiple of the Riemannian metric. It is now easy to see that every Einstein manifold is an $\mathcal{A}$-manifold. The first example of an $\mathcal{A}$-manifold which is not Einstein was given in \cite{gray}. The first non-homogeneous example was given in \cite{j1}. The main result (\cite{j1}, Theorem $3.3$) can be stated as
\begin{thm}
On a $S^1$-principal bundle $S$ over a K\"ahler-Einstein manifold, classified by the K\"ahler class of the base, with non-vanishing scalar curvature we can construct many Riemannian metrics $g_c$, parametrized by a positive number $c$, such that $(S,g_c)$ is an $\mathcal{A}$-manifold. Moreover for some choices of $c$ $(P,g_c)$ is a proper $\mathcal{A}$-manifold.
\end{thm}
This article gives a generalization of this theorem and the main result (Theorem \ref{main}) of the present work can be informally stated as
\begin{thm}
Given a principal $r$-torus bundle $P$ over a product of K\"ahler-Einstein manifolds $B_i$, $i=1,\ldots,m$, classified by K\"ahler classes of $B_i$, with positive first Chern classes we can construct a Riemannian metric $g$ such that $(P,g)$ is an $\mathcal{A}$-manifold.
\end{thm}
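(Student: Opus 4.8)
The plan is to realize $(P,g)$ as a Riemannian submersion over the product base and to read off $\rho$ from the O'Neill / Kaluza--Klein formulas, in which the K\"ahler--Einstein hypotheses force a very rigid shape. Write $M=M_1\times\cdots\times M_k$ with $(M_i,g_i,J_i)$ K\"ahler--Einstein, $\ric(g_i)=\lambda_i g_i$, and let $\sigma_i$ be the K\"ahler form of $M_i$, viewed as a closed $2$-form on $M$ supported on the $i$-th factor. I would fix a principal connection on $P$ whose $\mathbb{R}^r$-valued curvature has components $\Omega^a=\sum_i c_{ai}\,\sigma_i$; such a connection exists exactly because the first Chern classes are positive, so that after rescaling the classes $[\Omega^a]$ may be taken integral and carried by the given $T^r$-bundle. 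The metric is the bundle metric
\[
g=\pi^*\Bigl(\textstyle\sum_i t_i g_i\Bigr)+\sum_{a=1}^r (\omega^a)^2 ,
\]
with $t_i>0$ free and $\omega^a$ the connection forms; its fibres are flat, totally geodesic tori and $\pi$ is a Riemannian submersion with integrability ($A$-)tensor governed by the $\Omega^a$.

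I would then compute $\rho$ block by block. Because each $\sigma_i$ is parallel it is coclosed, so the mixed (horizontal--vertical) part of the submersion Ricci formula, which is a divergence of curvature, vanishes. For the vertical block the formula gives the constant symmetric matrix $P_{ab}=\tfrac14\langle\Omega^a,\Omega^b\rangle$. For the horizontal block I would use the two facts that $\ric(g_i)=\lambda_i g_i$ and that, $\sigma_i$ being the K\"ahler form, the symmetric square $g^{kl}(\sigma_i)_{\cdot k}(\sigma_i)_{\cdot l}$ equals $g_i$ on the $i$-th factor and vanishes on the others; since $\Omega^a=\sum_i c_{ai}\sigma_i$ is block-diagonal in the factors, the horizontal Ricci is $\kappa_i g_i$ on each factor with $\kappa_i$ a constant depending only on $\lambda_i$, $t_i$ and the $c_{ai}$. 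Altogether
\[
\rho=\sum_i \kappa_i\,\pi^*(t_i g_i)+\sum_{a,b}P_{ab}\,\omega^a\otimes\omega^b ,
\]
a constant-coefficient combination of the factor metrics and the fibre forms.

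The heart of the argument is to recognize that the $\mathcal{A}$-condition $\nabla_X\rho(X,X)=0$ is exactly the statement that $\rho$ is a Killing tensor, and then to exhibit each summand above as Killing. The fibre forms $\omega^a$ are metric-dual to the fundamental vector fields $K_a$ of the torus action, which are Killing for $g$; since the symmetric product of dual forms of Killing fields is a Killing tensor, each $\omega^a\odot\omega^b$ qualifies. The main work is to show that each horizontal lift $\pi^*(g_i)$ is itself a Killing tensor on $P$. I expect this to be the principal obstacle: although $g_i$ is parallel on the base, the horizontal distribution is non-integrable, so the cyclic sum $\mathfrak S_{X,Y,Z}(\nabla_X\pi^*g_i)(Y,Z)$ acquires cross-terms through the $A$-tensor whenever vertical or cross-factor directions are inserted. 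The plan is to evaluate this cyclic sum on vectors adapted to the splitting (vertical, and horizontal in each factor) and to see the terms cancel: the relevant contributions are proportional to $\sigma_i(\,\cdot\,,\cdot\,)$ and pair off by antisymmetry, while cross-factor contributions vanish because $A_{H_i}U_a$ lands in the $i$-th factor and $A_{H_i}H_j=0$ for $i\neq j$ — this is precisely where the product K\"ahler structure and the block-diagonal curvature are used.

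Granting that each $\pi^*(g_i)$ is Killing, $\rho$ is a Killing tensor for \emph{every} admissible choice of parameters, so $(P,g)$ is an $\mathcal{A}$-manifold unconditionally. What then remains is to guarantee a genuinely new (non-Einstein) example: since $\rho=cg$ forces $\kappa_i=c$ for all $i$ and $P_{ab}=c\,\delta_{ab}$, I would simply choose the $t_i$ and $c_{ai}$ so that the fibre eigenvalues differ from the horizontal ones (or the $\kappa_i$ differ among factors), which is generic, while keeping $t_i>0$ and the $[\Omega^a]$ integral so that $P$ exists. This last balancing of positivity, integrality and the non-Einstein inequality is routine but is where the positive-first-Chern-class hypothesis is genuinely consumed.
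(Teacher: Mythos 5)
Your proposal is correct, but it takes a genuinely different route from the paper. The paper works through its Theorem \ref{a-tw}, which requires the \emph{whole} horizontal distribution to be a single eigenspace $\ker(\ric-\mu\,\mathrm{Id})$ of the Ricci endomorphism; since each K\"ahler--Einstein factor $B_j$ contributes its own horizontal Ricci eigenvalue $q_j/x_j - \tfrac12\sum_{i,k}b_{ik}c_{ij}c_{kj}/x_j^2$, the bulk of the paper's proof of Theorem \ref{main} is spent \emph{tuning} the scaling constants $x_j$ so that these $m$ quantities coincide, and checking that a positive solution of the resulting equations exists. You bypass this entirely: you use only the equivalent characterization \eqref{a-war} (Ricci is a Killing tensor), write $\rho$ as a constant-coefficient sum $\sum_i\kappa_i\,\pi^*(t_ig_i)+\sum_{a,b}P_{ab}\,\theta^a\odot\theta^b$ (the mixed block vanishing because the K\"ahler forms are parallel, hence the connection is Yang--Mills --- the same harmonicity the paper derives via Corollary \ref{ric}), and show each summand is separately Killing. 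The step you flag as the principal obstacle does go through: for the horizontal lift $\pi^*g_i$ the only nontrivial case of the cyclic sum is two horizontal vectors and one fundamental field $\xi^a$, where it reduces to $-2\left(h_i(\tilde{T}_aX,Y)+h_i(X,\tilde{T}_aY)\right)$, which vanishes because the curvature $\Omega^a=\sum_i c_{ai}\sigma_i$ makes each $\tilde{T}_a$ block-diagonal with respect to the factor splitting and skew on each block; the symmetric products $\theta^a\odot\theta^b$ are Killing as constant combinations of duals of the commuting Killing fields $\xi^i$. What each approach buys: yours yields the $\mathcal{A}$-property for \emph{every} choice of positive $t_i$ (the paper's $x_j$) and positive definite $[b_{ij}]$ --- strictly more than Theorem \ref{main} claims, since no balancing equations need to be solved --- while the paper's route stays inside Jelonek's eigenvalue framework and produces the explicit spectral structure ($\mu$ on $\mathcal{H}$, $\lambda_i$ on the fibre) at the cost of the tuning. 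Your closing paragraph on arranging a non-Einstein example is not needed for the statement as posed, but it is harmless and correct in spirit.
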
 
Our construction is strongly motivated by \cite{wang} where authors prove the existence of Einstein metrics with positive scalar curvature on principal torus bundles. Also in \cite{pet} authors construct a principal torus bundle with total space being an $\mathcal{A}$-manifold but their construction is different and they obtain only examples with two eigenvalues of the Ricci tensor where in the present paper we give examples with any number of eigenvalues. 

\section{Preliminaries}

We introduce some notational conventions used along the work. Let $M,P,B$ be smooth differential manifolds and $k\in\mathbb{Z}$.
\begin{itemize}
\item $A^k(M)$ - set of differential $k$-forms on $M$.
\item If $p\,:\,P\rightarrow B$ is a fibre bundle we will refer to it by writing only $P$ for brevity.
\item For a $(1,1)$-tensor field $S$ denote by $\nabla S(X,Y)$ the covariant derivative $\nabla_XS(Y)$.
\item $[X,Y]$ denotes the Lie bracket of two vector fields.
\end{itemize}

In the rest of this section we prove a theorem which will help us to construct some $\mathcal{A}$-manifolds in the last section of this work.

Let $(M,g)$ be a Riemannian manifold and $\mathfrak{t}^* = \{\xi^1,\ldots,\xi^k\}$ be a set of linearly independent Killing vector fields on $M$ such that $g(\xi^i,\xi^j) = const$, $[\xi^i,\xi^j] = 0$ and $\mathfrak{t}^*$ gives a decomposition of $TM$ into distribution spanned by $\mathfrak{t}^*$ and its orthogonal complement. Define $T_iX = \nabla_X\xi^i$, $i=1,\ldots,k$.
\begin{lem}\label{kill}
Tensor $T_i$ satisfies, for $i,j=1,\ldots,k$,
\begin{align*}
T_{i}\xi^j &= 0,\\
L_{\xi^i}T_j &= 0.
\end{align*}
\end{lem}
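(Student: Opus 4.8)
The plan is to treat the two identities separately, deriving the first purely algebraically from the three hypotheses on $\mathfrak{t}^*$, and the second from the fact that a Killing field is an \emph{affine} vector field, i.e. its flow preserves the Levi-Civita connection.

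For the first identity $T_i\xi^j = 0$ I would test it against an arbitrary vector field $Y$ and run a short chain of equalities. Since each $\xi^i$ is Killing, the endomorphism $T_i = \nabla\xi^i$ is $g$-skew-symmetric, so $g(T_i\xi^j, Y) = g(\nabla_{\xi^j}\xi^i, Y) = -g(\nabla_Y\xi^i, \xi^j)$. Differentiating the constancy relation $g(\xi^i,\xi^j) = \mathrm{const}$ in the direction $Y$ gives $g(\nabla_Y\xi^i,\xi^j) = -g(\xi^i,\nabla_Y\xi^j)$, and skew-symmetry of $T_j$ rewrites the right-hand side as $g(T_j\xi^i, Y)$; hence $g(T_i\xi^j, Y) = -g(T_j\xi^i, Y)$. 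Finally the commutativity $[\xi^i,\xi^j]=0$ together with vanishing torsion yields $T_i\xi^j = \nabla_{\xi^j}\xi^i = \nabla_{\xi^i}\xi^j = T_j\xi^i$, so the right-hand side equals $-g(T_i\xi^j, Y)$; thus $g(T_i\xi^j, Y)=0$ for every $Y$, and $T_i\xi^j = 0$.

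For the second identity the main point—and the step I expect to carry the real content—is to invoke that the Killing field $\xi^i$ is affine, equivalently the commutation rule $L_{\xi^i}(\nabla_X Y) = \nabla_{[\xi^i,X]}Y + \nabla_X(L_{\xi^i}Y)$ holds for all $X,Y$. Writing the Lie derivative of the $(1,1)$-tensor $T_j$ on a vector field $X$, I get $(L_{\xi^i}T_j)(X) = L_{\xi^i}(T_j X) - T_j([\xi^i, X]) = L_{\xi^i}(\nabla_X\xi^j) - \nabla_{[\xi^i,X]}\xi^j$. Substituting the commutation rule with $Y = \xi^j$, the two terms $\nabla_{[\xi^i,X]}\xi^j$ cancel and there remains $(L_{\xi^i}T_j)(X) = \nabla_X(L_{\xi^i}\xi^j) = \nabla_X[\xi^i,\xi^j]$. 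The hypothesis $[\xi^i,\xi^j]=0$ then finishes it: $L_{\xi^i}T_j = 0$.

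The only genuine obstacle is justifying the affine property $L_{\xi^i}\nabla = 0$; I would either cite it as the standard fact that isometries preserve the connection, so that the infinitesimal generators of a Killing flow annihilate $\nabla$ under Lie differentiation, or derive it directly from the Killing equation by way of the Koszul formula. Everything else reduces to bookkeeping with skew-symmetry of $T_i$, the product rule for $g$, and the torsion-free identity $\nabla_X Y - \nabla_Y X = [X,Y]$.
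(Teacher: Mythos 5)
Your proof is correct and follows essentially the same route as the paper: for $T_i\xi^j=0$ it combines skew-symmetry of $T_i$ (Killing), differentiation of $g(\xi^i,\xi^j)=\mathrm{const}$, and the torsion-free identity $\nabla_{\xi^i}\xi^j=\nabla_{\xi^j}\xi^i$ from $[\xi^i,\xi^j]=0$, and for $L_{\xi^i}T_j=0$ it invokes exactly the commutation rule $L_{\xi^i}(\nabla_XY)=\nabla_{[\xi^i,X]}Y+\nabla_X(L_{\xi^i}Y)$ that the paper recalls for Killing fields. If anything, your first part is slightly more careful than the paper's: you pair $T_i\xi^j$ against an arbitrary $Y$, whereas the paper's displayed computation pairs only against fields $\xi^l\in\mathfrak{t}^*$ and so, as literally written, establishes only that $T_i\xi^j$ is orthogonal to the span of $\mathfrak{t}^*$ --- the same manipulation in an arbitrary direction, as you perform it, yields the full claim.
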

\begin{proof}
For any $\xi^i,\xi^j\in\mathfrak{t}^*$ we have $g(\xi^i,\xi^j) = const$, hence for any $X\in TM$ we have $X(g(\xi^i,\xi^j)) = 0$ and
\begin{displaymath}
0 = g(\nabla_X\xi^i,\xi^j) + g(\xi^i,\nabla_X\xi^j) = -g(X,\nabla_{\xi^j}\xi^i) - g(\nabla_{\xi^i}\xi^j,X).
\end{displaymath}
Since $[\xi^i,\xi^j] = 0$ we have $\nabla_{\xi^i}\xi^j = \nabla_{\xi^j}\xi^i$. This ends the proof of the first equality.

As for the second, recall that for any Killing vector field $X$ on $M$ and any vector fields $Y,Z$ we have
\begin{displaymath}
L_X(\nabla_YZ) = \nabla_{L_XY}Z + \nabla_Y(L_XZ).
\end{displaymath}
Hence, for any vector field $X$ we have
\begin{displaymath}
(L_{\xi^i}T_j)X = L_{\xi^i}(T_jX) - T_j(L_{\xi^i}X) = \nabla_{(L_{\xi^i}X)}\xi^j + \nabla_X([\xi^i,\xi^j]) - T_j(L_{\xi^i}X) = 0.
\end{displaymath}
\end{proof}

We can state a lemma and a corollary that will be of use in what will follow.
\begin{lem}
With the notation from above we have
\begin{displaymath}
R(X,\xi^i)Y = \nabla T_i(X,Y),\quad \nabla T_i(X,\xi^j) = -T_i(T_jX),
\end{displaymath}
where $R$ is the curvature tensor of $(M,g)$ and $X,Y\in TM$.
\end{lem}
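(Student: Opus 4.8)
The plan is to prove the two identities separately. I would first observe that the object to compute is nothing but the second covariant derivative of the Killing field: by the definition of $\nabla T_i$ we have $\nabla T_i(X,Y) = \nabla_X(T_i Y) - T_i(\nabla_X Y) = \nabla_X\nabla_Y\xi^i - \nabla_{\nabla_X Y}\xi^i$, the Hessian of $\xi^i$. The first equality is therefore the classical second-order identity for Killing fields, and the second equality will fall out of Lemma \ref{kill}. I expect essentially all the work to be in the first identity.

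For the first identity, the key input is that $\xi^i$ is Killing, so the $(1,1)$-tensor $T_i = \nabla\xi^i$ is skew-symmetric, i.e. $g(T_i X, Y) + g(X, T_i Y) = 0$. Differentiating this relation covariantly in a third direction and writing $S(Z,X,Y) := g(\nabla T_i(Z,X), Y)$ shows that $S$ is antisymmetric in its last two arguments. Independently, the Ricci identity gives the commutation rule $S(Z,X,Y) - S(X,Z,Y) = g(R(Z,X)\xi^i, Y)$. I would then iterate these two relations: starting from $S(Z,X,Y)$, alternately applying the Ricci identity and the antisymmetry in the last two slots produces, after six steps, an expression of the form $-S(Z,X,Y)$ plus a cyclic sum of three curvature terms. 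Solving yields $2\,S(Z,X,Y)$ equal to that cyclic sum, and the first Bianchi identity collapses the sum into the single term $g(R(Z,\xi^i)X, Y)$, giving $\nabla T_i(X,Y) = R(X,\xi^i)Y$ as claimed.

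The second identity is then immediate from Lemma \ref{kill}. By definition $\nabla T_i(X,\xi^j) = \nabla_X(T_i\xi^j) - T_i(\nabla_X\xi^j)$; the first term vanishes because $T_i\xi^j = 0$, and since $\nabla_X\xi^j = T_j X$ the second term equals $T_i(T_j X)$, so $\nabla T_i(X,\xi^j) = -T_i(T_j X)$.

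The main obstacle is the bookkeeping in the first identity: one must keep the curvature and sign conventions consistent through the alternating Ricci/antisymmetry steps, and correctly verify that the resulting cyclic sum of curvature terms vanishes up to the single surviving term by the first Bianchi identity (together with the pair symmetry $g(R(a,b)c,d) = g(R(c,d)a,b)$). Everything after that reduction, including the entire second identity, is routine.
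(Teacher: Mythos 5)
Your proposal is correct and takes essentially the same route as the paper: the paper's proof of the first identity consists of recalling the classical Killing-field fact $R(X,\xi) = \nabla_X(\nabla\xi)$ (so ``there is nothing to prove''), and its proof of the second identity is exactly your one-line differentiation of $T_i\xi^j = 0$, namely $(\nabla_X T_i)\xi^j + T_i(\nabla_X\xi^j) = 0$. The only difference is that you re-derive the classical identity from scratch via the skew-symmetry of $T_i$, the Ricci identity, and the first Bianchi identity --- the standard six-step alternation, whose bookkeeping does close up correctly to yield $2\,g(\nabla T_i(Z,X),Y) = 2\,g(R(Z,\xi^i)X,Y)$ --- whereas the paper simply cites this as known.
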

\begin{proof}
The proof consists in rewriting the analogous proof from \cite{j1} for $k$ Killing fields. Recall, that for any Killing vector field $\xi$ we have
\begin{displaymath}
R(X,\xi) = \nabla_X(\nabla\xi).
\end{displaymath}
Hence we have nothing to prove as
\begin{displaymath}
R(X,\xi^i)Y = \nabla_X T_i(Y).
\end{displaymath}
Since $T_i\xi^j = 0$ for $i,j=1,\ldots,k$ we have $\nabla_X T_i(\xi^j) + T_i(\nabla_X\xi^j) = 0$ and there is also nothing to prove.
\end{proof}

\begin{cor}\label{ric}
We have
\begin{displaymath}
\rho(\xi^i,X) = -g(X,\tr_g\nabla T_i),
\end{displaymath}
for $\xi^i$ from above and any $X\in TM$.
\end{cor}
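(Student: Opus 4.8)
The plan is to unwind the definition of the Ricci tensor as a trace of the curvature operator and then exploit the skew-symmetry of $T_i$ coming from $\xi^i$ being Killing. Fix a local orthonormal frame $\{e_a\}$ of $TM$ and adopt the convention $\rho(Y,Z) = \sum_a g(R(e_a,Y)Z, e_a)$. The first task is simply to substitute $Y = \xi^i$, $Z = X$ and apply the preceding lemma, which gives $R(e_a, \xi^i)X = \nabla T_i(e_a, X) = (\nabla_{e_a} T_i)X$, so that
\begin{displaymath}
\rho(\xi^i, X) = \sum_a g\bigl((\nabla_{e_a}T_i)X,\, e_a\bigr).
\end{displaymath}

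Next I would record the algebraic fact that $T_i$ is skew-symmetric with respect to $g$: since $\xi^i$ is Killing, $g(\nabla_Y \xi^i, Z) + g(Y, \nabla_Z \xi^i) = 0$, i.e. $g(T_i Y, Z) = -g(Y, T_i Z)$. Covariant differentiation preserves this skew-symmetry (because $\nabla g = 0$, the $g$-factors are untouched when one differentiates), so each $\nabla_{e_a} T_i$ is again a skew-symmetric endomorphism, whence $g((\nabla_{e_a} T_i) X, e_a) = -g(X, (\nabla_{e_a} T_i) e_a)$.

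Finally I would recognize the resulting sum as a trace. Summing the previous identity over $a$ yields
\begin{displaymath}
\rho(\xi^i, X) = -\sum_a g\bigl(X,\, (\nabla_{e_a}T_i)e_a\bigr) = -g\Bigl(X,\, \sum_a \nabla T_i(e_a, e_a)\Bigr) = -g(X, \tr_g \nabla T_i),
\end{displaymath}
which is exactly the claim.

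The computation is short, so the issue is bookkeeping rather than a genuine obstacle: one must fix the sign and index placement conventions for the Ricci tensor and for the trace $\tr_g \nabla T_i$ consistently, and verify that the skew-symmetry of $T_i$ really does pass to its covariant derivative. If a different Ricci convention were in force a compensating sign would appear in the first step, so I would double-check that the minus sign stated in the corollary matches the convention used earlier in the paper.
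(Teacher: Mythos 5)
Your proof is correct: the paper states this corollary without proof as an immediate consequence of the preceding lemma, and your derivation --- tracing the identity $R(e_a,\xi^i)X = (\nabla_{e_a}T_i)X$ over an orthonormal frame and using that skew-symmetry of $T_i$ passes to $\nabla_{e_a}T_i$ because $\nabla g = 0$ --- is exactly the intended argument. Your sign convention $\rho(Y,Z)=\sum_a g(R(e_a,Y)Z,e_a)$ is the one consistent with the paper's Killing identity $R(X,\xi)=\nabla_X(\nabla\xi)$, so the minus sign checks out.
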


We will now prove a theorem which states certain conditions for a Riemannian manifold to be an $\mathcal{A}$-manifold. In the proof we use an equivalent condition characterizing $\mathcal{A}$-manifolds (\cite{gray},\cite{j1}):
\begin{equation}\label{a-war}
g(\nabla\ric(X,Y),Z) + g(\nabla\ric(Y,Z),X) + g(\nabla\ric(Z,X),Y) = 0,
\end{equation}
for all vector fields $X,Y,Z$, where the Ricci endomorphism $\ric$ is defined by $g(\ric X,Y) = \rho(X,Y)$.

\begin{thm}\label{a-tw}
Let $(M,g)$ be as in Lemma \ref{kill} and $\ric$ the Ricci endomorphism. Assume that $\mu\in\mathbb{R}$ is an eigenvalue of $\ric$ and define $\mathcal{H} = \mathrm{ker}\,(\ric - \mu \mathrm{Id})$. Furthermore, assume that the orthogonal complement $\mathcal{H}^{\perp}$ of $\mathcal{H}$ with respect to $g$ is generated by $\mathfrak{t}^*$ and for every $i,j=1,\ldots,k$ we have $\rho(\xi^i,\xi^j) = const$. Then $(M,g)$ is an $\mathcal{A}$-manifold.
\end{thm}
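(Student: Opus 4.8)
The plan is to convert the pointwise hypotheses into a block description of $\rho$, and then verify the cyclic identity \eqref{a-war} directly on the orthogonal splitting $TM = \mathcal{H}\oplus\mathcal{H}^{\perp}$, testing it on frame fields adapted to this splitting. The first step is to turn the eigen-hypothesis into structure: since $\rho(\xi^i,X)=\lambda_i g(\xi^i,X)$ holds for \emph{all} $X$, we get $\ric\,\xi^i=\lambda_i\xi^i$, so each $\xi^i$ is a Ricci eigenvector. Combined with $\mathcal{H}=\ker(\ric-\mu\,\mathrm{Id})$ and $\mathcal{H}^{\perp}=\mathrm{span}\,\mathfrak{t}^*$, this gives the block form $\rho(V,W)=\mu\,g(V,W)$ for $V,W\in\mathcal{H}$, $\rho(\xi^i,V)=0$ for $V\in\mathcal{H}$, and $\rho(\xi^i,\xi^j)=\lambda_i g(\xi^i,\xi^j)=\mathrm{const}$, where $\mu$ and the $\lambda_i$ are constants.

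Next I would record the connection facts that drive the computation. From Lemma \ref{kill} one has $\nabla_{\xi^i}\xi^j=T_j\xi^i=0$, so vertical–vertical covariant derivatives vanish. For horizontal $V$, the field $T_iV=\nabla_V\xi^i$ is again horizontal: using the Killing skew-symmetry $g(\nabla_V\xi^i,\xi^j)=-g(\nabla_{\xi^j}\xi^i,V)=-g(T_i\xi^j,V)=0$. Also $T_i$ is skew-adjoint, and $g(\xi^i,V)\equiv 0$ for horizontal $V$, so any tangential derivative of this pairing vanishes, e.g. $g(\nabla_AW,\xi^i)=-g(W,T_iA)$ and $g(\xi^j,\nabla_{\xi^i}V)=0$. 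These identities, together with the constancy of $\mu,\lambda_i,g(\xi^i,\xi^j)$, are everything the cyclic sum will need.

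Since the left-hand side of \eqref{a-war} is tensorial in $X,Y,Z$, it suffices to check it with each of $X,Y,Z$ either horizontal or equal to some $\xi^i$, leaving four cyclic cases: $HHH$, $VHH$, $VVH$, $VVV$. I expect the $HHH$ case to give each summand $(\nabla_U\rho)(V,W)=0$ separately, because $\rho=\mu g$ on $\mathcal{H}$ with $\mu$ constant and the vertical parts of $\nabla_U V$ are annihilated by $\rho(\,\cdot\,,W)$; and $VVV$ to vanish immediately from $\nabla_{\xi}\xi=0$ and the constancy of $\rho(\xi^i,\xi^j)$. The real content is in the mixed cases.

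The main obstacle is the $VHH$ case (and, similarly, $VVH$), where several nonzero terms survive and must cancel against one another. Expanding $(\nabla_X\rho)(Y,Z)=X\rho(Y,Z)-\rho(\nabla_XY,Z)-\rho(Y,\nabla_XZ)$ on the triple $(\xi^i,V,W)$, the $\xi^i$-differentiated term again vanishes as in $HHH$, and the remaining two terms reduce to
\[
-\mu\bigl(g(T_iV,W)+g(T_iW,V)\bigr)-\lambda_i\bigl(g(\nabla_VW,\xi^i)+g(\nabla_WV,\xi^i)\bigr).
\]
The first group vanishes by the skew-symmetry of $T_i$, and the second does too after rewriting $g(\nabla_VW,\xi^i)=-g(W,T_iV)$ and $g(\nabla_WV,\xi^i)=-g(V,T_iW)$ and invoking skew-symmetry once more; the analogous collapse in $VVH$ uses $g(\xi^j,\nabla_{\xi^i}V)=0$. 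Thus the crux is precisely that the skew-symmetry of the Killing operators $T_i$ forces the mixed terms to cancel in pairs, which is exactly where the hypotheses $g(\xi^i,\xi^j)=\mathrm{const}$ and $\rho(\xi^i,X)=\lambda_i g(\xi^i,X)$ are consumed.
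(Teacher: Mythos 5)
Your proposal is correct and takes essentially the same route as the paper: both verify the cyclic condition \eqref{a-war} case by case on the splitting $TM=\mathcal{H}\oplus\mathrm{span}\,\mathfrak{t}^*$, using the eigen-structure $\ric|_{\mathcal{H}}=\mu\,\mathrm{Id}$, $\ric\,\xi^i=\lambda_i\xi^i$ together with the Lemma \ref{kill} identities ($T_i\xi^j=0$, hence $\nabla_{\xi^i}\xi^j=0$ and $T_i$ horizontal) and the skew-symmetry of the Killing operators $T_i$, which is exactly the paper's cancellation $g(Y,\nabla_XZ)+g(X,\nabla_YZ)=0$ in the mixed case. The only cosmetic difference is that you expand $\nabla\rho$ directly in four cases where the paper phrases the same computation via $\nabla\ric(X,Y)=-(\ric-\mu\,\mathrm{Id})(\nabla_XY)$ in three.
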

\begin{proof}
Observe first that if $X\in TM$, $Y\in\mathcal{H}$ and $\xi^i\in\mathfrak{t}^*$ then we have
\begin{gather*}
\ric Y = \mu Y,\quad \nabla\ric(X,Y) = -(\ric -\mu\mathrm{Id})(\nabla_XY),\\
\ric \xi^i = \sum_{j=1}^kc^i_j\xi^j,
\end{gather*}
for some constants $c^i_j$, $i,j=1,\ldots,k$.

We will now check the condition \eqref{a-war} for different choices of vector fields. First assume, that $X,Y,Z\in\mathcal{H}$. Then
\begin{displaymath}
g(\nabla\ric(X,Y),Z) = g(-(\ric -\mu\mathrm{Id})(\nabla_XY), Z) = 0,
\end{displaymath}
where we use the fact that $Z\in\mathcal{H}$.

Now assume, that $X,Y\in\mathcal{H}$ and $Z\in\mathcal{H}^{\perp}$. We can additionally assume that $Z = \xi^i\in\mathfrak{t}^*$, hence it is Killing. We have $g(\nabla\ric(Y,Z),X) = g(\nabla\ric(Y,X),Z)$ and
\begin{displaymath}
g(\nabla\ric(Z,X),Y) =  g(-(\ric -\mu\mathrm{Id})(\nabla_ZX), Y) = 0,
\end{displaymath}
so remaining components in the cyclic sum \eqref{a-war} are
\begin{gather*}
g(\nabla\ric(X,Y),Z) + g(\nabla\ric(Y,X),Z) \\
= g(-(\ric -\mu\mathrm{Id})(\nabla_XY), Z) + g(-(\ric -\mu\mathrm{Id})(\nabla_YX), Z)\\
= \mu\left(g(\nabla_XY,Z) + g(\nabla_YX,Z)\right) - \left(g(\nabla_XY,\sum_{j=1}^kc^i_j\xi^j) + g(\nabla_YX,\sum_{j=1}^kc^i_j\xi^j)\right) \\
= - \mu\left(g(Y,\nabla_XZ) + g(X,\nabla_YZ)\right) + \sum_{j=1}^kc^i_j \left(g(Y,\nabla_X\xi^j) + g(X,\nabla_Y\xi^j)\right) = 0,
\end{gather*}
because $X,Y$ are perpendicular to every $\xi^j$ and $\xi^j$ are Killing, $j=1,\ldots,k$.

Now let $X =\xi^i,Y=\xi^j$ and $Z\in TM$. We have
\begin{align*}
g(\nabla\ric(X,Y),Z) &= g(\nabla_X(\ric Y),Z) - g(\ric(\nabla_XY),Z)\\
 &= \sum_{l=1}^kc^j_lg(T_lX,Z) - \sum_{l=1}^kc^j_lg(T_lX,\ric Z) = 0
\end{align*}
by Lemma \ref{kill}. The same argument is valid for $g(\nabla\ric(Y,Z),X)$ because of the symmetry of $\nabla\ric$. For the last summand of the cyclic sum we have
\begin{align*}
g(\nabla\ric(Z,X),Y) &= g(\nabla_Z(\ric X),Y) - g(\ric(\nabla_ZX),Y)\\
&= \sum_{l=1}^kc^i_l g(\nabla_Z\xi^l,Y) - \sum_{l=1}^kc^j_l g(\nabla_ZX,\xi^l)\\
&= -\sum_{l=1}^kc^i_l g(Z,T_lY) + \sum_{l=1}^kc^j_l g(Z,T_i\xi^l) = 0
\end{align*}
again by Lemma \ref{kill}. This proves the vanishing of the cyclic sum for $Z\in\mathcal{H}$ and $Z\in\mathfrak{t}^*$ and finishes the proof.
\end{proof}

\section{Principal torus bundles}

Let $(B,h)$ be a K\"ahler-Einstein manifold of real dimension $n$. We will first recall some facts about principal $S^1$-bundles. If $p:S\rightarrow B$ is a principal circle bundle, then denote by $\theta\in A^1(S)$ its connection form and by $\xi$ its fundamental vector field, i.e. $\theta(\xi) = 1$. Thanks to the fact that the Lie algebra of $S^1$ is the real line $\mathbb{R}$ this connection form is a real-valued differential form on $S$. Denote by $\Omega = d\theta$ the curvature form of $S$. Since $\Omega$ is projectable, there exists a closed $2$-form $\omega$ on $B$, such that $\Omega = p^*\omega$.

For some positive function $f$ on $B$ we can define a metric on $S$ by
\begin{displaymath}
g_f(X,Y) = f^2\theta(X)\theta(Y) + p^*h(X,Y),
\end{displaymath}
where $p^*$ denotes the pullback map induced by $p$.

With this metric the map $p:(S,g_f)\rightarrow (B,h)$ becomes a Riemannian submersion with totally geodesic fibres. Moreover, assume that $f$ is a constant and $\omega=-\frac{\tau}{n}\eta$, where $\tau$ is the scalar curvature of $(B,h)$ and $\eta$ is the pullback of the K\"ahler form. Then $(S,g_f)$ becomes an $\mathcal{A}$-manifold (see \cite{j1} for details).

Let $p:P\rightarrow B$ be the Whitney sum of principal $S^1$-bundles $P_1$,\ldots, $P_r$. Then $P$ is a $T^r$-principal bundle, where $T^r$ is a $r$-dimensional torus. The connection form of $P$ is a differential form with values in the Lie algebra $\mathcal{L}(T^r)$ of $T^r$, namely $\mathbb{R}^r$. Components $\theta^1,\ldots,\theta^r$ of this connection form are pullbacks of connection forms on $P_1,\ldots, P_r$. With this comes $r$ fundamental fields $\xi^1,\ldots, \xi^r$ which are pullbacks of the fundamental fields on $P_1,\ldots,P_r$.

Similarly we can write the curvature form $\Omega$ of $P$ as $\Omega_1e_1 +\ldots + \Omega_re_r\in\mathcal{L}(T^r)$ where $\Omega_i$ is the curvature form of the $S^1$ bundle $P_i$ and $e_1,\ldots,e_r$ are basis vectors for $\mathbb{R}^r$. Hence, there exists $r$ closed $2$-forms $\omega_i$ on $B$ such that $\Omega_i = p_i^*\omega_i$. Pulling $(\omega_1,\ldots,\omega_r)$ back along the diagonal map, we get a single $2$-form $\omega$ on $B$ such that $\Omega = p^*\omega$.

We now introduce a metric on $P$ which will make $p:P\rightarrow B$ a Riemannian submersion
\begin{displaymath}
g(X,Y) = \sum_{i,j=1}^r{b_{ij}\theta^i(X)\theta^j(Y)} + p^*h(X,Y),
\end{displaymath}
where $[b_{ij}]_{i,j=1}^r$ is a positive definite symmetric matrix. This matrix induces a left-invariant metric on $T^r$.

\begin{prop}
In the above situation $\xi^1,\ldots,\xi^r$ are Killing vector fields with respect to the metric $g$.
\end{prop}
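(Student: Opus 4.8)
The plan is to show directly that the Lie derivative of $g$ along each fundamental field vanishes, i.e. $L_{\xi^i}g = 0$ for $i = 1,\ldots,r$, since this is equivalent to $\xi^i$ being Killing. The metric splits as a sum of the ``vertical'' part $\sum_{j,k}b_{jk}\theta^j\otimes\theta^k$ and the ``horizontal'' part $p^*h$, with constant coefficients $b_{jk}$, so by the Leibniz rule it suffices to understand how $L_{\xi^i}$ acts on each $\theta^j$ and on $p^*h$ separately.

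The key observation is that the flow of the fundamental field $\xi^i$ is exactly the right action $R_{\exp(te_i)}$ of the one-parameter subgroup generated by $e_i\in\mathbb{R}^r = \mathcal{L}(T^r)$. I would use this to reduce everything to invariance statements. First, because $T^r$ is abelian its adjoint representation is trivial, so the defining equivariance $R_a^*\theta = \mathrm{Ad}_{a^{-1}}\circ\theta$ of the principal connection form reduces to plain invariance $R_a^*\theta = \theta$; passing to components and differentiating at $t=0$ yields $L_{\xi^i}\theta^j = 0$ for all $i,j$. Second, the right action is vertical, i.e. $p\circ R_a = p$, whence $R_a^*(p^*h) = (p\circ R_a)^*h = p^*h$, and differentiating gives $L_{\xi^i}(p^*h) = 0$.

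Combining these, and using that the $b_{jk}$ are constants and that $L_{\xi^i}$ is a derivation on tensors, I would compute
\begin{displaymath}
L_{\xi^i}g = \sum_{j,k}b_{jk}\bigl((L_{\xi^i}\theta^j)\otimes\theta^k + \theta^j\otimes(L_{\xi^i}\theta^k)\bigr) + L_{\xi^i}(p^*h) = 0,
\end{displaymath}
so that each $\xi^i$ is Killing. The main point to justify carefully is the invariance $L_{\xi^i}\theta^j = 0$, which hinges on the commutativity of the torus; everything else is a routine application of the Leibniz rule together with the fact that the fundamental flows preserve the fibres. One could instead verify $(L_{\xi^i}g)(X,Y) = 0$ by testing on vertical and horizontal $X,Y$ using $\theta^j(\xi^k) = \delta^j_k$ and $[\xi^i,\xi^j] = 0$, but the Lie-derivative-of-forms argument above avoids any case analysis and is the approach I would favour.
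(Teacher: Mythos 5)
Your proposal is correct, but it takes a genuinely different route from the paper. The paper argues entirely at the level of forms on $P$: it first reduces $L_{\xi^k}g$ to the derivatives $L_{\xi^k}\theta^l$ (implicitly using $L_{\xi^k}(p^*h)=0$, which you justify explicitly), then applies Cartan's formula $L_{\xi^k}\theta^l = d(i_{\xi^k}\theta^l) + i_{\xi^k}d\theta^l$ and kills $d\theta^l(\xi^k,X)$ by the case analysis you deliberately avoid, treating horizontal $X$ via the fact that the bracket of a fundamental field with a horizontal field is horizontal, and vertical $X$ via commutativity of $T^r$. You instead invoke the equivariance axiom $R_a^*\theta = \mathrm{Ad}_{a^{-1}}\circ\theta$ of the principal connection, which for the abelian structure group degenerates to strict invariance, and differentiate along the flow $R_{\exp(te_i)}$ of $\xi^i$; together with $p\circ R_a = p$ this actually proves the stronger statement $R_a^*g = g$ for all $a\in T^r$, i.e.\ that the whole torus acts by isometries, from which the Killing property follows by differentiation. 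Your argument is more conceptual and global, and it buys the full isometric torus action; the paper's computation is more elementary and self-contained, needing only the defining properties of fundamental fields and the bracket, not the equivariance of the connection form. One point you rely on that deserves a word of justification (and which the paper's construction supplies) is that the pulled-back forms $\theta^1,\ldots,\theta^r$ really are the components of a single principal connection on the Whitney sum $P$, so that the equivariance axiom applies to them jointly; with that noted, your proof is complete.
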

\begin{proof}
Let $\xi^k$ be one of vector fields defined above as pullbacks of fundamental fields on $P_k$. First observe, that the Lie derivative $L_{\xi^k}g$ depends only on the Lie derivatives of $\theta^l$ with respect to $\xi^k$
\begin{displaymath}
L_{\xi^k}g = \sum_{i,j=1}^r{b_{ij}\left(L_{\xi^k}(\theta^j)\otimes\theta^i) + \theta^j\otimes L_{\xi^k}(\theta^i)\right)}.
\end{displaymath}
As $L_{\xi^k}\theta^k = 0$ only non-vanishing parts of the above derivative are
\begin{displaymath}
L_{\xi^k}\theta^l = d(i_{\xi^k}\theta^l) + i_{\xi^k} d\theta^l,\quad l\neq k.
\end{displaymath}
Since $\theta^l(\xi^k) = 0$ we only have to check values of the $1$-form $d\theta^l(\xi^k,X)$ for any $X$ in $TP$. We have
\begin{displaymath}
d\theta^l(\xi^k,X) = \xi^k(\theta^l(X)) - X(\theta^l(\xi^k)) - \theta^l([\xi^k,X]),
\end{displaymath}
where the second term is immediately zero. 

Assume first, that $X$ belongs to the horizontal distribution of the connection on $P$. Then $\theta^l(X) = 0$ and $[\xi^k,X]$ is a horizontal vector field, as $\xi^k$ is a fundamental vector field. Hence the right hand side vanishes. 

Next, assume that $X$ is vertical. As $d\theta^l$ is $C^{\infty}(P)$-multilinear we can assume that $X$ is just a linear combination of $\xi^1,\ldots,\xi^r$. Hence $\theta^l(X)$ is constant and the differential vanishes. The Poisson bracket $[\xi^k,X]$ also vanishes as $T^r$ is abelian. This finishes the proof.
\end{proof}

As a corollary of Lemma \ref{kill} we get a property of tensors $T_i$, $i=1,\ldots,r$.
\begin{cor}
The tensor $T_i$ is horizontal, i.e. there exist a tensor $\tilde{T}_i$ on $B$ such that $\tilde{T}_i\circ p_* = p_*\circ T_i$.
\end{cor}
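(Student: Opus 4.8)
The plan is to verify the two structural properties that let a tensor on the total space of a principal bundle descend to the base: \emph{horizontality} in the input slot together with \emph{invariance} under the structure group. Once these hold, one defines $\tilde T_i$ by $\tilde T_i(p_*X) := p_*(T_iX)$, and the whole issue becomes the well-definedness of this assignment. I would also record the horizontality of the image of $T_i$, which is what justifies calling $T_i$ itself a horizontal tensor.

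First I would treat the input slot. Since the vertical distribution is spanned by $\xi^1,\ldots,\xi^r$ and $T_i\xi^j = 0$ for all $j$ by Lemma \ref{kill}, the tensor $T_i$ annihilates every vertical vector. Consequently, if $X$ and $X'$ are two vectors at the same point of $P$ with $p_*X = p_*X'$, then $X - X'$ is vertical and $T_iX = T_iX'$; so $p_*(T_iX)$ depends only on $p_*X$ at each point, independently of the vertical component of the chosen lift.

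Next I would establish invariance across a fibre, which is the only step I expect to require genuine care. The metric $g$ is $T^r$-invariant because the $\xi^j$ are Killing, and the vertical distribution is obviously invariant; hence its $g$-orthogonal complement, the horizontal distribution, is preserved by the right action $R_a$, $a\in T^r$. Thus the horizontal lifts of a fixed $\bar X\in T_bB$ at two points $u$ and $u\cdot a$ of the fibre over $b$ are related by $(R_a)_*$. Because $L_{\xi^j}T_i = 0$ for every $j$ (Lemma \ref{kill}) and the flows of $\xi^1,\ldots,\xi^r$ generate the whole torus action, $T_i$ commutes with $(R_a)_*$; and since $p\circ R_a = p$ we get the same value of $p_*(T_iX)$ for both lifts. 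Combined with the previous step, this shows that $\tilde T_i(p_*X) := p_*(T_iX)$ is a well-defined $(1,1)$-tensor on $B$ satisfying $\tilde T_i\circ p_* = p_*\circ T_i$.

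Finally, to see that $T_i$ is horizontal in the full sense, I would check that its image is horizontal. Fixing a horizontal $X$ and using that $\xi^i$ is Killing, the endomorphism $Y\mapsto\nabla_Y\xi^i = T_iY$ is $g$-skew-symmetric, so that
\begin{displaymath}
g(T_iX,\xi^j) = -g(X,T_i\xi^j) = 0
\end{displaymath}
for every $j$, again by Lemma \ref{kill}. Hence $T_iX$ is orthogonal to the vertical distribution, so $T_i$ sends horizontal lifts to horizontal lifts and is genuinely the lift of $\tilde T_i$. The main obstacle is the fibrewise invariance in the second step; the two horizontality computations are immediate consequences of Lemma \ref{kill} and the Killing property.
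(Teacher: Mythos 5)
Your proof is correct and follows exactly the route the paper intends: the corollary is stated without proof precisely because the two properties of Lemma \ref{kill} --- $T_i\xi^j=0$ (well-definedness on each fibre of $p_*$) and $L_{\xi^j}T_i=0$ (invariance under the torus action, using connectedness of $T^r$) --- are the whole argument, which you spell out faithfully. Your added check that the image of $T_i$ is horizontal, via the $g$-skew-symmetry of $T_i$, is a worthwhile supplement since the paper uses this fact implicitly later (e.g.\ in deriving \eqref{o1}).
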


\section{O'Neill fundamental tensors}

We would like to compute the O'Neill tensor (see \cite{oneill}) $A$ of the Riemannian submersion $p\;:\; P\rightarrow B$. First observe that fibres are totally geodesic, hence the O'Neill tensor $T$ is zero.

\begin{prop}
The vertical distribution of the Riemannian submersion $p:P\rightarrow B$ is totally geodesic.
\end{prop}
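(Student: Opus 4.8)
The plan is to show that for any pair of vertical vector fields $V,W$ the covariant derivative $\nabla_V W$ is again vertical; since the vertical distribution is integrable with the fibres as its integral manifolds, this is exactly the assertion that the fibres are totally geodesic (equivalently, that the O'Neill tensor $T$ vanishes). The vertical distribution is spanned pointwise by the fundamental fields $\xi^1,\ldots,\xi^r$, so it suffices to understand the derivatives $\nabla_{\xi^i}\xi^j$ and then propagate the conclusion to general vertical fields via the Leibniz rule.

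First I would verify that $(P,g)$ together with $\mathfrak{t}^* = \{\xi^1,\ldots,\xi^r\}$ satisfies the hypotheses of Lemma \ref{kill}. The fields $\xi^i$ are Killing by the preceding Proposition; they are linearly independent, being the fundamental fields of the distinct circle factors; they commute, $[\xi^i,\xi^j]=0$, since $T^r$ is abelian; and $g(\xi^i,\xi^j)=b_{ij}$ is constant because the matrix $[b_{ij}]$ has constant entries. Finally, the span of $\mathfrak{t}^*$ is precisely the vertical distribution, whose $g$-orthogonal complement is the horizontal distribution of the connection, so the required orthogonal decomposition of $TP$ holds.

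With the hypotheses in place, Lemma \ref{kill} gives $T_i\xi^j = 0$ for all $i,j$, that is $\nabla_{\xi^j}\xi^i = 0$. Writing arbitrary vertical fields as $V=\sum_i a_i\xi^i$ and $W=\sum_j b_j\xi^j$ with $a_i,b_j\in C^{\infty}(P)$, the Leibniz rule yields
\begin{displaymath}
\nabla_V W = \sum_{i,j} a_i\,(\xi^i b_j)\,\xi^j + \sum_{i,j} a_i b_j\,\nabla_{\xi^i}\xi^j = \sum_{i,j} a_i\,(\xi^i b_j)\,\xi^j,
\end{displaymath}
where the second sum vanishes by the previous line. The remaining expression is a combination of the $\xi^j$, hence vertical, so $\nabla_V W$ is vertical and the fibres are totally geodesic.

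There is no serious obstacle here: the geometric content is entirely packaged in Lemma \ref{kill}, and the only point requiring care is checking its hypotheses in the present setting and then passing from $\nabla_{\xi^i}\xi^j=0$ on the spanning fields to arbitrary vertical fields via the product rule, noting that the function-coefficient term stays vertical so that no horizontal component can arise. Consequently the O'Neill tensor $T$, which is the horizontal part of $\nabla_V W$ for vertical $V,W$, is identically zero, consistent with the remark opening this section.
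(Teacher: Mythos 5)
Your proof is correct and takes essentially the same route as the paper: both reduce the question to the fundamental fields via tensoriality/Leibniz and then invoke $T_i\xi^j = \nabla_{\xi^j}\xi^i = 0$ from Lemma \ref{kill}. Your explicit verification of the hypotheses of Lemma \ref{kill} (Killing property, constancy of $g(\xi^i,\xi^j)=b_{ij}$, commutativity, orthogonal decomposition) is a bit more careful than the paper's, but it is not a different method.
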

\begin{proof}
We have to check that $g(\nabla_UV,X) = 0$ for all vertical vector fields $U,V$ and any horizontal vector field $X$. Since we are interested only in horizontal part of $\nabla_UV$ we can assume that $U = \xi^j$, $V= \xi^i$ where $i,j\in\{1,\ldots,r\}$, since $\nabla(U,V) = \nabla_UV$ is tensorial in the first variable and
\begin{displaymath}
\nabla_U(fV) = df(U)V + \nabla_UV
\end{displaymath}
for any smooth function $f$.

From $T_j\xi^i=0$ we get for any horizontal field $X$
\begin{displaymath}
2g(\nabla_{\xi^i}\xi^j,X) =  0,
\end{displaymath}
since the Poisson bracket of any fundamental and vertical vector field is horizontal.
\end{proof}

Now we proceed to the O'Neill tensor $A$.

\begin{lem}\label{atens}
For any vector fields $E,F$ on $P$ we have
\begin{displaymath}
A_{E}F = \sum^{r}_{i,j=1}{b^{ij}\left(g(E,T_iF)\xi^{j} + g(\xi^i,F)T_{j}E\right)},
\end{displaymath}
where $b^{ij}$ are coefficients of the inverse matrix of $[b_{ij}]_{i,j=1}^r$.
\end{lem}
\begin{proof}
From \cite{oneill} we have
\begin{displaymath}
A_{E}F = \mathcal{V}\nabla_{\mathcal{H}E}\mathcal{H}F + \mathcal{H}\nabla_{\mathcal{H}E}\mathcal{V}F,
\end{displaymath}
where $\mathcal{V}$ and $\mathcal{H}$ denote the vertical and the horizontal part of a vector field respectively.

For any horizontal vector fields $X,Y$ we have
\begin{displaymath}
g(\nabla_XY,\xi^i) = Xg(Y,\xi^i) - g(Y,\nabla_X\xi^i) = g(X,T_iY),
\end{displaymath}
for $i=1,\ldots,r$.

The vertical part of any vector field $F$ is given by
\begin{displaymath}
\mathcal{V}F = \sum^{r}_{i,j=1}{b^{ij}g(\xi^i,F)\xi^{j}}.
\end{displaymath}
Hence
\begin{align*}
g\left(\mathcal{H}E,T_{k}\mathcal{H}F\right) &= g\left(E - \sum^{r}_{i,j=1}{b^{ij}g(\xi^i,E)\xi^{j}},T_{k}\left(F - \sum^{r}_{i,j=1}{b^{ij}g(\xi^i,F)\xi^{j}}\right)\right) \\
&= g(E,T_{k}F).
\end{align*}
Combining all of above formulae we get
\begin{displaymath}
\mathcal{V}\nabla_{\mathcal{H}E}\mathcal{H}F = \sum_{i,j=1}^r{b^{ij}g(E,T_iF)\xi^j}.
\end{displaymath}

To compute the second summand of the O'Neill tensor $A$ we observe, that for any function $f$ and any $E\in TP$ we have $\mathcal{H}\nabla_{E}f\xi^{i} = \mathcal{H}f\nabla_{E}\xi^{i}$ for $i=1,\ldots,r$. Then
\begin{align*}
\mathcal{H}\nabla_{\mathcal{H}E}\mathcal{V}F &= \mathcal{H}\nabla_{E - \sum^{r}_{i,j=1}{b^{ij}g(\xi^i,F)\xi^{j}}}\sum^{r}_{i,j=1}{b^{ij}g(\xi^i,F)\xi^{j}}\\
 &= \sum^{r}_{i,j=1}{b^{ij}g(\xi^i,F)T_{j}E}.
\end{align*}
\end{proof}

\section{Conditions for $P$ to be an $\mathcal{A}$-manifold}

We will check under what conditions a principal $T^r$-bundle $P$ with some Riemannian metric $g$ satisfies assumptions of Theorem \ref{a-tw}. An example of a Riemannian manifold satisfying those conditions will be given in the next section. The distribution $\mathcal{H}$ from the theorem will be the horizontal distribution of the submersion.

The first condition we will reinterpret is $\rho(X,V) = 0$ for $X\in\mathcal{H}$ and $V$ in its orthogonal complement. From Corollary \ref{ric} we get
\begin{displaymath}
0 = \rho(\xi^i,X) = -g(X,\tr_g\nabla T_i),\quad i=1,\ldots,r.
\end{displaymath}

Recall, that for $i=1,\ldots,r$ we have $\Omega^i = p^*\omega^i$, where $\omega^i$ is a $2$-form on $B$ and $\Omega^i = d\theta^i$. We obtain an explicit formula for $\theta^i$. First observe that for any $X\in TP$
\begin{displaymath}
g(\xi^k,X) = \sum_{j=1}^r{b_{kj}\theta^j(X)},\quad k=1,\ldots,r.
\end{displaymath}
If we solve this for $\theta^j$ we get
\begin{displaymath}
\theta^j(X) = \sum_{i=1}^r{b^{ji}g(\xi^i,X)},\quad j=1,\ldots,r,
\end{displaymath}
where $b^{ji}$ are the coefficients of the inverse matrix of $[b_{ji}]_{i,j=1}^r$. Taking the differential we get
\begin{displaymath}
\Omega^i(X,Y) = d\theta^i(X,Y) = 2\sum_{j=1}^r{b^{ij}g(T_jX,Y)},
\end{displaymath}
where $X,Y\in TP$.
As $T_i$ are horizontal for $i=1,\ldots,r$ we get
\begin{equation}\label{o1}
\omega^i(p_*X,p_*Y) = 2\sum_{j=1}^r{b^{ij}h(\tilde{T}_j(p_*X),p_*Y)},
\end{equation}
where $\tilde{T}_i$ were defined in Lemma \ref{kill}. The co-differential of $\omega^i$ is given by 
\begin{displaymath}
(\delta\omega^i)(p_*X) = -\sum_{k=1}^m{(\nabla^h_{E_k}\omega^i)(E_k,p_*X)},
\end{displaymath}
where $\{E_k\}_{k=1}^m$ is some orthonormal basis of $B$. Hence
\begin{displaymath}
(\delta\omega^i)(p_*X) = -2\sum_{j=1}^r{b^{ij}h(\tr_h\nabla^h\tilde{T}_j,p_*X)},\quad i=1,\ldots,r.
\end{displaymath}
By a straightforward computation we get for every $i=1,\ldots,r$
\begin{align*}
h(p_*X,\tr_h\nabla^h\tilde{T}_i) &= \sum_{k=1}^m{\left(h(p_*X,\nabla^h_{E_k}(\tilde{T}_iE_k)) - h(p_*X, \tilde{T}_i(\nabla^h_{E_k}E_k))\right)}\\
&= \sum_{k=1}^m{\left(g(X,\nabla_{X_k}(T_iX_k)) - g(X, T_i(\nabla_{X_k}X_k)\right)}\\
&= g(X,\tr_g\nabla T_i).
\end{align*}
We easily see, that if the condition $\rho(V,X)=0$ is satisfied, then $\omega^i$ is harmonic for $i=1,\ldots,r$. Moreover, from equations for co-differentials of $\omega^i$ we see that if all $\omega^i$, $i=1,\ldots,r$, are harmonic then from non-singularity of the matrix $[b_{ij}]$ we get that $h(\tr_h\nabla^h\tilde{T}_1,p_*X) = \ldots = h(\tr_h\nabla^h\tilde{T}_r,p_*X) = 0$. This means that $g(X,\tr_g\nabla T_i) = 0$, $i=1,\ldots,r$. Hence the condition $\rho(V,X) = 0$ is satisfied iff 
\begin{equation}\label{orto}
\omega^i \text{ are harmonic for every } i\in\{1,\ldots,r\}.
\end{equation}
\begin{rem}
Observe that this is the Yang-Mills condition for the principal connection $\theta$ of the bundle $P$.
\end{rem}

The next condition is $\rho(\xi^i,\xi^j) = const$ for $\xi^i$ as above. We reformulate it as following. Again using Corollary \ref{ric} we have $\rho(\xi^i,\xi^j) = -g(\xi^j,\tr_g\nabla T_i),i,j\in\{1,\ldots,r\}$. This means that
\begin{displaymath}
-g(\xi^j,\tr_g\nabla T_i) = const.
\end{displaymath}
But for any orthonormal basis $\{X_k\}_{k=1}^m$ of the horizontal distribution $\mathcal{H}$ and $i\in\{1,\ldots,r\}$ we have $\tr_g\nabla T_i = \sum_{k=1}^m{(\nabla T_i)(X_k,X_k)}$, so for $j=1,\ldots,r$ we get
\begin{displaymath}
g(\xi^j,\tr_g\nabla T_i) = -\sum_{k=1}^m{g(\nabla T_i(X_k,\xi^j),X_k)}.
\end{displaymath} 
As $\nabla T_i(X_k,\xi^j) = -T_iT_jX_k$ we have
\begin{displaymath}
\sum_{k=1}^m{g(T_jX_k,T_iX_k)} = const,\quad i,j\in\{1,\ldots,r\}
\end{displaymath}
and from the fact that $T_i$ is horizontal for $i=1,\ldots,r$ we get
\begin{equation}\label{lambda}
\sum_{k=1}^m{h(\tilde{T}_j(X_k)_*,\tilde{T}_i(X_k)_*)} = const,\quad i,j\in\{1,\ldots,r\},
\end{equation}
where $X_*$ is the projection of the vector field $X$ from $M$ to $B$. It follows that we can find a basis $\{\zeta^1,\ldots,\zeta^r\}$ of the vertical distribution such that $\ric \zeta^i = \lambda_i\zeta^i$, where $\lambda_i\in\mathbb{R}$ and $i=1,\ldots,r$.

Now we consider the equivalent condition for
\begin{displaymath}
\rho(X,Y) = \mu g(X,Y)
\end{displaymath}
where $X$ and $Y$ are in $\mathcal{H}$.

First let us recall that the tensor $A$ for two horizontal vector fields $X$ and $Y$ is given by
\begin{displaymath}
A_XY = \sum_{i,j=1}^r{b^{ij}g(X,T_iY)\xi^j}.
\end{displaymath}
We use the formula for the Ricci tensor $\rho$ of the total space of Riemannian submersion from \cite{besse}:
\begin{displaymath}
\rho(X,Y) = \check{\rho}(X,Y) -2\sum_{k=1}^m{g(A_XX_k,A_YX_k)},
\end{displaymath}
where $\check{\rho}$ is the horizontal symmetric $2$-tensor given by
\begin{displaymath}
\check{\rho}(X,Y) = \rho_B(p_*X,p_*Y)
\end{displaymath}
and $\rho_B$ is the Ricci tensor of $B$. We can rewrite this as
\begin{align*}
\rho(X,Y) &= \check{\rho}(X,Y) - 2\sum_{k=1}^m{g(\sum_{i,j=1}^r{b^{ij}g(X,T_iX_k)\xi^j}, \sum_{s,t=1}^r{b^{st}g(Y,T_sX_k)\xi^t})}\\
&= \check{\rho}(X,Y) - 2\sum_{k=1}^m{g(\sum_{i,j=1}^r{b^{ij}g(T_iX,X_k)\xi^j}, \sum_{s,t=1}^r{b^{st}g(T_sY,X_k)\xi^t})}\\
&= \check{\rho}(X,Y) - 2\sum_{k=1}^m{\sum_{i,j=1}^r{b^{ij}\sum_{s,t=1}^r{b^{st}b_{jt}g(T_iX,X_k)g(T_sY,X_k)}}}\\
&= \check{\rho}(X,Y) - 2\sum_{k=1}^m{\sum_{i,j=1}^r{b^{ij}g(T_iX,g(T_jY,X_k)X_k)}}\\
&= \check{\rho}(X,Y) - 2\sum_{i,j=1}^r{b^{ij}g(T_iX,T_jY)}.
\end{align*}

As $\rho$ is symmetric it is determined by its values on the diagonal
\begin{equation}\label{mu}
\rho(X,X) = \check{\rho}(X,X) - 2\sum_{i,j=1}^r{b^{ij}g(T_iX,T_jX)}.
\end{equation}
Fortunately we can express this condition in terms of the metric on the base $B$ as $\check{\rho}$ and $T_i$ are horizontal for $i=1,\ldots,r$.

\section{Construction of an example over K\"ahler-Einstein base}

In this section we will construct an $\mathcal{A}$-manifold on a torus bundle over a product of K\"ahler-Einstein manifolds. We will follow \cite{j1} and \cite{wang}.

Let $(B,g_B)$ be a compact K\"ahler manifold with positive definite Ricci tensor. By a theorem of S. Kobayashi (\cite{koba1}) it is simply connected. Moreover, by another theorem from the same work if $B$ has positive first Chern class then $H_1(B;\mathbb{Z}) = 0$. Hence if $B$ is K\"ahler-Einstein with positive scalar curvature the above holds true. From the Universal Coefficient Theorem for Cohomology we get that $H^2(B;\mathbb{Z})$ has no torsion. Then we can write the first Chern class $c_1(B)$ as an integer multiple of some indivisible class $\alpha\in H^2(B;\mathbb{Z})$, say $c_1(B) = q\alpha$, $q\in\mathbb{Z}_+\setminus \{0\}$. We can normalize the K\"ahler metric $g_B$ so that the cohomology class of $\omega_B$ is the same as that of $2\pi\alpha$ which is equivalent to choosing the Einstein constant to be equal to $q$. 

By another theorem of Kobayashi (\cite{koba2}) every principal $S^1$-bundle $S$ over a Riemannian manifold $(B,g_B)$ is classified by a cohomology class in $H^2(B;\mathbb{Z})$. In fact this class is just the Euler class $e(S)$ of $S$. Hence every principal $T^r$-bundle $p:P\rightarrow B$ is classified by $r$ cohomology classes $\beta_1,\ldots,\beta_r$ in $H^2(B;\mathbb{Z})$. Those classes can be described as Euler classes of the quotient principal $S^1$-bundle $P/S_i$ over $B$, where $S_i$ is the sub-torus of $T^r$ with the $i$-th $S^1$ factor deleted in the canonical decomposition $T^r = S^1\times\ldots\times S^1$, where we take the $r$-fold product of $S^1$.

\begin{thm}\label{main}
Let $(B_j,g_j)$ be compact K\"ahler-Einstein manifolds with $c_1(B_j)>0$ for $j=1,\ldots,m$ and define a K\"ahler manifold $B = B_1\times\ldots\times B_m$ with metric $h = \sum_{j=1}^m{x_jg_j}$ where $x_j$ are some positive constants. Let $p:P\rightarrow B$ be a principal $T^r$ bundle characterised by $\beta_i = \sum_{j=1}^m{a_{ij}pr_j^*\alpha_j}$ for $i =1,\ldots,r$, where for every $j\in\{1,\ldots,m\}$ the map $pr_j:B\rightarrow B_j$ is the projection on the $j$-th factor, $\alpha_j$ is the indivisible class in $H^2(B_j;\mathbb{Z})$ such that $c_1(B_j) = q_j\alpha_j$ for $q_j\in\mathbb{Z}_+\setminus \{0\}$ and $[a_{ij}]$ is a $r\times m$ matrix with integer coefficients. Then $P$ with a metric defined by
\begin{displaymath}
g(X,Y) = \sum_{i,j=1}^r{b_{ij}\theta^i(X)\theta^j(Y)} + p^*h(X,Y),
\end{displaymath}
is an $\mathcal{A}$-manifold, where $\theta^i$ are as before and $[b_{ij}]$ is some positive, symmetric and non-singular matrix of dimension $r\times r$.
\end{thm}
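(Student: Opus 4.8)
The plan is to verify that the metric $g$ on $P$ satisfies all the hypotheses of Theorem \ref{a-tw}, taking $\mathcal{H}$ to be the horizontal distribution of the submersion $p:P\rightarrow B$ and $\mathfrak{t}^* = \{\xi^1,\ldots,\xi^r\}$. By the work of the previous section this reduces to three conditions: that each $\omega^i$ be harmonic (equivalently $\rho(\xi^i,X)=0$ for horizontal $X$, by the Yang--Mills reformulation established above), that the quantities in \eqref{lambda} be constant (giving $\rho(\xi^i,\xi^j)=\mathrm{const}$ and an eigenbasis of the vertical distribution with eigenvalues $\lambda_i$), and that \eqref{mu} read $\rho(X,X)=\mu\,g(X,X)$ for horizontal $X$ with a single constant $\mu$.

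First I would exploit the Kähler--Einstein structure to identify the tensors $\tilde T_i$ explicitly. By \eqref{o1} each $\tilde T_j$ is, up to the constant factors $b^{ij}$, determined by the $2$-forms $\omega^i$, and these in turn represent the classes $2\pi\beta_i=\sum_j 2\pi a_{ij}\,pr_j^*\alpha_j$. Since $[\omega_j]=[2\pi\alpha_j]$ is (a multiple of) the Ricci/Kähler form of the Einstein factor $B_j$, I would choose $\omega^i=\sum_j a_{ij}\,pr_j^*\omega_{B_j}$ as the harmonic representative; on a Kähler--Einstein manifold the Kähler form is parallel and harmonic, so each $\omega^i$ is automatically harmonic and the first (Yang--Mills) condition holds for free. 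This is the step where the Kähler--Einstein hypothesis does the essential work. Consequently $\tilde T_j$ is a constant-coefficient combination of the complex structures $J_l$ of the factors, rescaled by the metric coefficients $x_l$; concretely $\tilde T_i$ acts on the $B_l$-factor as a scalar multiple of $J_l$, where the scalar depends on the $a$'s, the $b^{ij}$'s and $x_l$.

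Because each $\tilde T_i$ restricted to a factor is a constant multiple of $J_l$ and $J_l$ is an isometry, the traces $\sum_k h(\tilde T_j X_k,\tilde T_i X_k)$ are constant on each factor and hence globally constant, so \eqref{lambda} holds and the vertical eigenvalue condition is satisfied. The genuine obstacle is the horizontal eigenvalue condition \eqref{mu}: there $\check\rho(X,X)$ equals $q_l\,x_l^{-1}\,g(X,X)$ on the $B_l$-factor (the factor $x_l^{-1}$ coming from the rescaling $h=\sum x_l g_l$ and $c_1(B_l)=q_l\alpha_l$), while the correction term $2\sum_{i,j}b_{ij}g(T_iX,T_jX)$ contributes a factor-dependent scalar built from the $J_l$'s. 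For $\rho$ to be a single scalar $\mu$ on all of $\mathcal{H}$, these two factor-dependent scalars must sum to the same $\mu$ on every $B_l$. This yields a system of $m$ equations (one per factor) in the unknowns $x_l>0$ and the entries $b_{ij}$ of the positive-definite matrix, all required to produce a common value $\mu$.

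The heart of the proof is therefore to solve this system: I would fix the integer matrix $[a_{ij}]$ and the constants $q_l$, treat the $x_l$ and $b_{ij}$ as free positive parameters, and show the system admits a positive-definite solution — most cleanly by first choosing $[b_{ij}]$ (say a small multiple of the identity, or adapted to $[a_{ij}]$) and then solving the resulting $m$ scalar equations for the $x_l$, invoking a continuity or fixed-point argument to guarantee positivity, exactly as in \cite{wang} and \cite{j1}. Once such $x_l,b_{ij}$ are produced, all three hypotheses of Theorem \ref{a-tw} hold and $(P,g)$ is an $\mathcal{A}$-manifold. I expect the main difficulty to be precisely the simultaneous solvability of \eqref{mu} across the $m$ factors with $[b_{ij}]$ positive-definite, since the factors are coupled through the off-diagonal $b_{ij}$ and the shared target value $\mu$.
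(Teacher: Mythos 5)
Your proposal is correct and follows essentially the same route as the paper: choose the connection whose curvature is the parallel harmonic combination $\sum_j a_{ij}\,pr_j^*\eta_j$ (the paper cites Theorem 1.4 of \cite{wang} for this), deduce that each $\tilde T_i$ is a constant combination of the complex structures $J_l$ scaled by $x_l^{-1}$, conclude \eqref{lambda} from $J_l$ being an isometry, and reduce \eqref{mu} to the system $\mu = q_j/x_j - \tfrac{1}{2}\sum_{i,k} b_{ik}c_{ij}c_{kj}/x_j^2$, $j=1,\ldots,m$, solved with $[b_{ij}]$ fixed and the $x_j$ as free parameters. The only difference is at the final step, where the paper solves the system explicitly via the substitution $x_s=\alpha_s x_1$ and a size condition on the $\alpha_s$, while you invoke a continuity/fixed-point argument; both settle the same (correctly identified) crux.
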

\begin{proof}
We assume that $i=1,\ldots,r$ and $j=1,\ldots,m$ along the proof. Moreover, we normalize each metric $g_j$ so that the K\"ahler form $\eta_j$ of $g_j$ is in the cohomology class of $2\pi\alpha_j$, i.e. we rescale the metric so that each Einstein's constant is equal to $q_j$.

Let $P_i$ be the principal $S^1$-bundle over $B$ determined by $\beta_i$. From Theorem 1.4 \cite{wang} we know, that there exist a connection form $\theta_i$ on $P_i$ such that
\begin{displaymath}
d\theta_i = \sum_{k=1}^m{a_{ik}p^*\eta^*_k},
\end{displaymath}
where $\eta^*_j = pr^*_j\eta_j$. Since a K\"ahler form on a K\"ahler manifold is harmonic we see that the condition \eqref{orto} is satisfied.

Let $P$ be the principal $T^r$ bundle obtained as the Whitney sum of $P_1,\ldots,P_r$. We know that $\theta_1,\ldots,\theta_r$ are components of the principal connection on $P$ and $\Omega_i = d\theta_i$ is the $i$-th component of the curvature form on $P$. Recall that we have $\Omega_i = p^*\omega_i$ for some $2$-form on $B$. Comparing the above formula and \eqref{o1} for each $i$ we have
\begin{displaymath}
\sum_{k=1}^m{a_{ik}\eta^*_k(X,Y)} = 2\sum_{l=1}^r{b^{il}h(\tilde{T}_l(X),Y)},
\end{displaymath}
where $X,Y\in TB$. Because $h$ is the product metric, the complex structure tensor $J_j$ is an endomorphism of $TB_j$ and since $TB = TB_1\oplus\ldots\oplus TB_m$ the pull-back $J^*_j$ of $J_j$ by $pr_j$ preserves $TB_j$. Hence we can write $\eta^*_j(X,Y) = \frac{1}{x_j}h(J^*_jX,Y)$. We can now define $\tilde{T}_i$ with the following formula
\begin{displaymath}
\tilde{T}_i = \frac{1}{2}\sum_{k=1}^r{b_{ik}\left(\sum_{l=1}^m{\frac{1}{x_l}a_{kl}J^*_l}\right)},
\end{displaymath}
Since $\sum_{k=1}^r{b_{ik}a_{kl}}$ are just coefficients of the product of $[b_{ik}]$ and $[a_{ij}]$ which we denote by $c_{il}$ we can write the above formula as
\begin{displaymath}
\tilde{T}_i = \frac{1}{2}\sum_{l=1}^m{\frac{1}{x_l}c_{il}J^*_l}.
\end{displaymath}
We have to determine what conditions have to be imposed on $x_j$ for $(P,g)$ to be an $\mathcal{A}$-manifold. Let us denote by $\{X_j\}$ some orthonormal basis of $B$ such that $\{X_l^j\}_{l=1}^{n_j}$ is an orthogonal basis of $B_j$, where $n_j$ is the real dimension of $B_j$. For any $i,l\in\{1,\ldots,r\}$ we have
\begin{align*}
\sum_{k=1}^m{h(\tilde{T}_iX_k,\tilde{T}_lX_k)} &= \frac{1}{4}\sum_{k=1}^m{h\left(\sum_{p=1}^m{\frac{1}{x_p}c_{ip}J^*_pX_k},\sum_{s=1}^m{\frac{1}{x_s}c_{ls}J^*_sX_k}\right)} \\
&= \frac{1}{4}\sum_{k=1}^m{\sum_{p=1}^m{\frac{c_{ip}c_{lp}}{x_p^2}h\left(J^*_pX_k,J^*_pX_k\right)}},
\end{align*}
where the last equality follows from the above discussion of tensor $J_j$. We can descend further to metrics on $B_j$:
\begin{align*}
\sum_{k=1}^m{h(\tilde{T}_iX_k,\tilde{T}_lX_k)} &= \frac{1}{4}\sum_{j=1}^m\sum_{p=1}^m\sum_{k=1}^{n_j}\frac{c_{ip}c_{lp}}{x_p}g_j(J_pX_k,J_pX_k) \\
& = \frac{1}{4}\sum_{j=1}^m{\sum_{p=1}^m{\frac{c_{ip}c_{lp}}{x_p^2}n_j}}
\end{align*}
and we see that the condition \eqref{lambda} is satisfied.

Let us look at the equation \eqref{mu} defining the other eigenvalue.
\begin{align*}
\rho(X^*,X^*) &= \check{\rho}(X^*,X^*) - 2\sum_{i,k=1}^r{b^{ik}g(T_iX^*,T_kX^*)} \\
&= \rho_B(X,X) - 2\sum_{i,k=1}^r{b^{ik}h(\tilde{T}_iX,\tilde{T}_kX)}\\ 
&= \rho_B(X,X) - \frac{1}{2}\sum_{i,k=1}^r{b^{ik}\left(\sum_{p=1}^m\frac{1}{x_p^2}c_{ip}c_{kp}h(J^*_pX,J^*_pX)\right)}. 
\end{align*}
Assuming that $X$ is an element of the local orthonormal frame on $B_j$ we get $m$ equations
\begin{equation}\label{mulokal}
\mu = \frac{q_j}{x_j} - \frac{1}{2}\sum_{i,k=1}^r\frac{b^{ik}c_{ij}c_{kj}}{x_j^2},\quad j=1,\ldots,m.
\end{equation}
It is now easy to see that we can choose coefficients $x_j$ so that above equations are satisfied for any $a_{ik}$, where $k\in\{1,\ldots,m\}$, and $b_{ip}$, where $p\in\{1,\ldots,r\}$. One solution can be obtained if we take $x_s = \alpha_s x_1$ for some $\alpha_s > 0$, where $s\in\{2,\ldots,m\}$. In this case we have $m-1$ equations
\begin{equation}\label{warmi}
\frac{\alpha_s q_1 - q_s}{\alpha_s x_1} = \frac{1}{2}\sum_{i,k=1}^r\frac{\alpha_s^2b^{ik}c_{k1}c_{i1} - b^{ik}c_{ks}c_{is}}{\alpha_s^2 x_1^2},
\end{equation}
or
\begin{equation}
\frac{q_s}{\alpha_s} = q_1
\end{equation}
if the sum in the right-hand side of \eqref{mulokal} is zero. In the second case we just need to take $\alpha_s = \frac{q_s}{q_1}$ for $s=2,\ldots,m$. Equations \eqref{warmi} are equivalent to
\begin{displaymath}
\alpha_s x_1 = \frac{1}{2}\sum_{i,k=1}^r\frac{\alpha_s^2b^{ik}c_{k1}c_{i1} - b^{ik}c_{ks}c_{is}}{\alpha_s q_1 - q_s},
\end{displaymath}
where $s=2,\ldots,m$. Hence the positive solution exists iff the following equation has a positive solution
\begin{equation}\label{rk}
\left(2q_1x_1 -\sum_{i,k=1}^rb^{ik}c_{k1}c_{i1}\right)\alpha_s^2 - 2\alpha_sq_sx_1 + \sum_{i,k=1}^rb^{ik}c_{ks}c_{is} = 0.
\end{equation}
This equation has exactly two solutions precisely when the following inequality holds
\begin{displaymath}
q_s^2x_1^2 - \left(2q_1x_1 -\sum_{i,k=1}^rb^{ik}c_{k1}c_{i1}\right)\left(\sum_{i,k=1}^rb^{ik}c_{ks}c_{is}\right) >0.
\end{displaymath}
we see that if we choose $x_1$ big enough then this inequality is satisfied. Moreover, from Viete's formulae we have that the two solutions $\alpha_s'$ and $\alpha_s''$ satisfy
\begin{displaymath}
\alpha_s'+\alpha_s'' = \frac{2q_sx_1}{2q_1x_1 -\sum_{i,k=1}^rb^{ik}c_{k1}c_{i1}}.
\end{displaymath}
Again if we choose big $x_1$ we see that this sum is positive, hence at least one solution of \eqref{rk} is positive. Concluding, for big enough $x_1$ the system \eqref{warmi} has a positive solution.
 
We see that the assumptions of the Theorem \ref{a-tw} are satisfied, hence $(P,g)$ is an $\mathcal{A}$-manifold.
\end{proof}

\begin{rem}
Under further constraints we can observe that some of our $\mathcal{A}$-manifolds are proper. Consider the above construction on $T^r$-principal bundle with matrix $[a_{ij}]$ of rank $r$. Now observe that $\nabla\ric = 0$ implies $\nabla\eta_i = 0$, where $\eta_i$ is an eigenvector of the Ricci tensor corresponding to the eigenvalue $\lambda_i$ on the vertical distribution. Indeed let $X$ be in the horizontal distribution. Then
\begin{displaymath}
0 = \nabla_X\ric(\eta_i) = - (\ric -\lambda_i)\nabla_X\eta_i,
\end{displaymath}
which means that $\nabla_X\eta_i$ is in the vertical distribution, specifically in the eigendistribution corresponding to the eigenvalue $\lambda_i$. Since $\eta_i$ is a linear combination of $\xi^j$'s with constant coefficients, $\nabla_X\eta_i$ is horizontal. Hence it is zero.

From the fact that $\eta_i = \sum_{p=1}^re^i_p\xi^p$ and $e^i_p$ being all constants we see that $\nabla_X\eta_i = 0$ iff $\sum_{p=1}^re^i_p\nabla_X\xi^p =0$. We show that $\nabla_X\xi^p$'s are linearly independent for $p=1,\ldots,r$ and any horizontal vector field $X$ which is a contradiction with $\eta_i$ being non-zero.

Suppose that for some functions $b_1,\ldots,b_r$ and horizontal vector field $X$ we have
\begin{equation}\label{linind}
b_1\nabla_X\xi^1 +\ldots + b_r\nabla_X\xi^r = 0.
\end{equation}
Recall that in the setting from the above theorem 
\begin{displaymath}
\nabla_X\xi^p = \frac{1}{2}\sum_{l=1}^m\frac{c_{pl}}{x_l}\tilde{J}_l^*X,
\end{displaymath}
where $\tilde{J}_l^*$ is a tensor field obtained from lifting the complex structure tensor field $J_l$ of the K\"ahler manifold $M_l$ to $P$. Now equation \eqref{linind} becomes
\begin{displaymath}
\sum_{l=1}^m\sum_{p=1}^r\left(b_p\frac{c_{pl}}{x_l}\right)\tilde{J}^*_lX = 0.
\end{displaymath}
If we take $X$ to be some element $E_l$ of the orthonormal basis of $M_l$ then for each $l=1,\ldots,m$ we get
\begin{displaymath}
\sum_{p=1}^rb_p\frac{c_{pl}}{x_l}\tilde{J}^*_lE_l = 0
\end{displaymath}
and this is equivalent to $\sum_{p=1}^rb_pc_{pl} = 0$ for each $l$. We can write this in the matrix form as $C^Tb = 0$ where $C^T$ is the transposed matrix of $C=[c_{ij}]$ and $b$ is the column vector $[b_1,\ldots,b_r]^T$. Recall that $C = BA$, where $B$ is the matrix of the metric on torus $T^r$ and $A=[a_{ij}]$, as defined in the above theorem, is of rank $r$, hence $C$ is of rank $r$. Now we can see that $C^Tb = 0$ iff $b=0$ and it follows that $\nabla\xi^1,\ldots,\nabla\xi^r$ are linearly independent, which is a contradiction with $\nabla\eta_i = 0$ for any $i=1,\ldots,r$.
\end{rem}

\end{document}